\def\fullpage {
\addtolength{\topmargin}{-2 cm}
\addtolength{\oddsidemargin}{-1.4 cm} \addtolength{\textwidth}{+2.8 cm}
\addtolength{\textheight}{+3.3 cm}}
\newtheorem{theorem}{Theorem}
\newtheorem{proposition}{Proposition}
\newtheorem{lemma}{Lemma}
\newtheorem{remark}{Remark}
\newfont{\bss}{cmss12}
\newcommand{\ignore}[1]{}
\renewcommand{\phi}{\varphi}
\def\qed{{\hfill\hphantom{.}\nobreak\hfill$\Box$}}
\def\min{\mathop{\mathrm{min}}\nolimits}
\def\1{\mathrel{\mathbf{1}}}
\newcommand{\bt}{\begin{theorem}}
\newcommand{\et}{\end{theorem}}
\newcommand{\btt}{\begin{ttheorem}}
\newcommand{\ett}{\end{ttheorem}}
\newcommand{\bcc}{\begin{conjecture}}
\newcommand{\ecc}{\end{conjecture}}
\newcommand{\bc}{\begin{corollary}}
\newcommand{\bl}{\begin{lemma}}
\newcommand{\ec}{\end{corollary}}
\newcommand{\el}{\end{lemma}}
\newcommand{\bq}{\begin{question}}
\newcommand{\eq}{\end{question}}
\newcommand{\bp}{\begin{proposition}}
\newcommand{\ep}{\end{proposition}}
\newcommand{\br}{\begin{remark}}
\newcommand{\er}{\end{remark}}
\newcommand{\PG}{\ensuremath{PG}}
\newcommand{\mL}{\mathcal{L}}
\newcommand{\mP}{\mathcal{P}}
\begin{document}
 \date{}

\title{Small Maximal Partial Ovoids \\ in \\ Generalized Quadrangles}

\author{Jeroen Schillewaert \thanks{Research supported by Marie Curie IEF grant GELATI (EC grant nr 328178)}\\
\small Department of Mathematics\\ [-0.8ex]
\small Imperial College\\ [-0.8ex]
\small London, U.K.\\
\small\tt jschillewaert@gmail.com\\
\and
Jacques Verstraete \thanks{Research supported by NSF Grant DMS 1101489.}\\
\small Department of Mathematics\\[-0.8ex]
\small University of California at San Diego\\[-0.8ex]
\small California, U.S.A.\\
\small \tt jverstraete@math.ucsd.edu
}

\maketitle

\begin{abstract}
A {\em maximal partial ovoid} of a generalized quadrangle is a maximal set of points no two of which
are collinear.  The problem of determining the smallest size of a maximal partial ovoid in quadrangles has been extensively studied in the literature.
In general, theoretical lower bounds on the size of a maximal partial ovoid in a quadrangle of order $(s,t)$ are linear in $s$.
In this paper, in a wide class of quadrangles of order $(s,t)$ we give a construction of a maximal partial ovoid of size
at most $s \cdot \mbox{polylog}(s)$, which is within a polylogarithmic factor of theoretical lower bounds.
 The construction substantially improves previous quadratic upper bounds in quadrangles of order $(s,s^2)$, in particular in
 the well-studied case of the elliptic quadrics $Q^-(5,s)$. \end{abstract}

\section{Introduction}

Generalized polygons were introduced in Tits' seminal paper on triality \cite{Tits}, and encompass as special cases projective planes and generalized quadrangles. Only a small set of examples of generalized quadrangles is known: finite thick generalized quadrangles of order $(s,t)$ are known to exist only when $(s,t),(t,s) \in \{(q,q^2),(q^2,q^3),(q,q),(q-1,q+1)\}$ and $q$ is a prime power.
The existence of further generalized quadrangles is one of the main open problems in projective geometry.
A standard reference on generalized quadrangles is Payne and Thas~\cite{PT} and for generalized polygons see Van Maldeghem \cite{HVM}.

\bigskip

A {\em partial ovoid} of a generalized quadrangle is a set of points no two of which
are collinear. The problem of determining the smallest size of a maximal partial ovoid in generalized quadrangles has been very extensively studied in the literature (see De Beule, Klein, Metsch and Storme~\cite{deBKMS} and the references therein). Ovoids have a long history going back to the seminal work of Tits, and their construction and classification is the subject of many research articles over the last fifty years.

\bigskip

Particular attention has been given to maximal partial ovoids in $Q^-(5,q)$. Ebert and Hirschfeld~\cite{EH} showed that any maximal partial ovoid in $Q^-(5,q)$ has size at least $2q + 1$ if $q \geq 2$ and size at least $2q + 2$ if $q \geq 5$. Maximal partial ovoids of size $q^2 + 1$ in $Q^-(5,q)$ are easily constructed, namely elliptic quadrics $Q^-(3,q) \subset Q^-(5,q)$ consisting of points collinear with a given point of $Q^-(5,q)$,
and further examples of the same size are given by Aguglia, Cossidente and Ebert~\cite{ACE}.
The best upper bound so far for $Q^-(5,q)$ was given by Metsch and Storme  (see Theorem 3.7 of \cite{MS}) in the case $q=2^{2h+1}, h \geq 1$, and in this case a maximal partial ovoid of size roughly $\tfrac{1}{2}q^2$ was constructed.
This construction is geometric and the restriction on $q$ comes from the fact that Suzuki-Tits \cite{TitsSuzuki} ovoids are used.

 \subsection{Main Theorem}

 In a generalized quadrangle of order $(s,t)$, a simple counting argument shows that a maximal partial ovoid always has size at least $(1 + s + st + s^2t)/(1 + s + st)$.  A quadrangle of order $(s,t)$ is called {\em locally sparse} if for any set of three points, the number of points collinear with all three points is at most $s + 1$.
It is well-known that every $(s,s^2)$ quadrangle is locally sparse due to a result by Bose and Shrikhande~\cite{BS}, and in particular $Q^-(5,q)$ is locally sparse. However, it is known that in some cases, there may be sets of three points collinear with up to $t + 1$ other points -- for instance, in $H(4,q^2)$, which is a quadrangle of order $(s,t) = (q^2,q^3)$, there are triples of points all collinear with the same $q^3 + 1$ other points~\cite{PT}. In this paper, we show that locally sparse $(s,t)$ quadrangles with $t \geq s(\log s)^{2\alpha}$ have maximal partial ovoids which are within a polylogarithmic factor of the simple counting bound given above:

\medskip

\begin{theorem}\label{main}
For any $\alpha > 4$, there exists $s_{\rm o}(\alpha)$ such that if $s \geq s_{\rm o}(\alpha)$ and $t \geq s(\log s)^{2\alpha}$, then any locally sparse generalized quadrangle of order $(s,t)$ has a maximal partial ovoid of size at most
$s(\log s)^{\alpha}$.
\end{theorem}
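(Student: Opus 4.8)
The plan is to pass to the \emph{collinearity graph} $\Gamma$ of the quadrangle, whose vertices are the $(1+s)(1+st)$ points and whose edges join collinear points. This graph is regular of degree $d=s(t+1)$; two collinear points have exactly $s-1$ common neighbours (the other points of their line, there being no off-line triangles in a quadrangle), two non-collinear points have exactly $t+1$ common neighbours, and local sparseness is precisely the statement that any three points have at most $s+1$ common neighbours. A maximal partial ovoid is exactly an \emph{independent dominating set} of $\Gamma$, so the goal becomes: produce an independent set $I$ with $|I|\le s(\log s)^{\alpha}$ such that every vertex lies in $I$ or is adjacent to $I$. Since Higman's inequality gives $t\le s^{2}$, the number of points is at most $s^{4}$, so a union bound over all points costs only a factor $s^{4}$, i.e. a constant multiple of $\log s$ in an exponent; this is why we can afford to aim for events of probability $1-s^{-\Omega(1)}$.

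The construction is probabilistic. First I would select a random set $A$, including each point independently with probability $p=\Theta\big((\log s)^{\alpha}/(st)\big)$, so that $|A|\approx p(1+s)(1+st)\approx s(\log s)^{\alpha}$ and, for each point $Q$, the number of neighbours of $Q$ inside $A$ concentrates around $pd\approx(\log s)^{\alpha}$. From $A$ I would extract an independent set by a purely local rule (assign i.i.d.\ uniform priorities and keep the points of $A$ that beat all of their neighbours in $A$), so that the output is \emph{automatically} a partial ovoid and the size bound follows from a routine first-moment and concentration computation. The genuine difficulty is that forcing independence tends to thin a random set by a factor of order $pd$, which can destroy domination; to keep domination robust one must retain a $(\log s)^{\alpha}$-fold \emph{redundancy} of surviving dominators at every point, and this is arranged either by superimposing many independent layers or by iterating the selection over $O(\log s)$ rounds (fresh priorities each round, keeping only new points non-adjacent to those already chosen, so the accumulated set stays a partial ovoid while the undominated set shrinks geometrically).

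The heart of the argument, and the step I expect to be the main obstacle, is maximality: showing that with high probability \emph{every} point $Q$ retains a neighbour in $I$. Fix $Q$ and let $Z$ count the neighbours of $Q$ that survive into $I$, so $\mathbb{E}Z$ is of order $(\log s)^{\alpha}$; one needs $\Pr[Z=0]$ to be \emph{exponentially} small in $\mathbb{E}Z$, since only then does $\Pr[Z=0]\le \exp(-\Omega((\log s)^{\alpha}))\ll s^{-4}$ survive the union bound over all points. A bare second-moment bound $\Pr[Z=0]\le \mathrm{Var}(Z)/(\mathbb{E}Z)^{2}$ is not enough; instead I would use a Janson/Chernoff-type inequality, for which the crucial quantity is the total dependency weight among the events ``$u\in I$'' and ``$u'\in I$'' over pairs of neighbours $u,u'$ of $Q$. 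That weight is controlled by the overlap of the local neighbourhoods of $u$ and $u'$, i.e.\ by the number of common neighbours of the \emph{triple} $\{Q,u,u'\}$. Local sparseness caps this at $s+1$, rather than the $t+1$ that a generic pair of points already shares, a saving of order $s/t$; the hypothesis $t\ge s(\log s)^{2\alpha}$ is exactly calibrated so that $s/t\le(\log s)^{-2\alpha}$ drives the dependency term down to $O(1)$, negligible against $\mathbb{E}Z\approx(\log s)^{\alpha}$. Thus the numerology is forced by three competing demands: the size $(\log s)^{\alpha}$, the square $(\log s)^{2\alpha}$ appearing in the dependency (hence in the required lower bound on $t$), and the $s^{4}$ union bound (hence $\alpha>4$).

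Finally I would assemble the pieces. After establishing that one well-chosen redundant layer, or $O(\log s)$ iterated rounds, leaves at most $s^{4-\Omega(1)}$ undominated points with high probability, I would mop these up by a crude greedy partial ovoid on the undominated set, which is of negligible size and can be chosen non-adjacent to the independent set already built. The total size stays $O(s(\log s)^{\alpha})$, and after fixing the implied constants and choosing the threshold $s_{\mathrm o}(\alpha)$ large enough to absorb them, one obtains an independent dominating set, i.e.\ a maximal partial ovoid, of size at most $s(\log s)^{\alpha}$, within a polylogarithmic factor of the counting bound $(1+s+st+s^{2}t)/(1+s+st)$.
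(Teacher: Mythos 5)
Your plan is a generic ``random independent dominating set in the collinearity graph'' argument, and as written it has three gaps that I do not think are cosmetic. First, the thinning problem you flag is not actually resolved: with $p=\Theta((\log s)^{\alpha}/(st))$ and degree $d=s(t+1)$, a point of $A$ survives the priority rule with probability $\approx 1/(pd)\approx(\log s)^{-\alpha}$, so for the single-layer scheme the quantity $Z$ in your key paragraph has $\mathbb{E}Z=\Theta(1)$, not $\Theta((\log s)^{\alpha})$; the claimed redundancy has to come from the layering/iteration, but that is exactly the part left unspecified, and in the iterated version the rounds are heavily conditioned on one another (the undominated set and the residual degrees into it change each round), so the per-round success probability and the final size bound both need a real argument. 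Second, the concentration tool named does not apply as stated: ``$u$ survives the priority rule'' is not a monotone event in the underlying independent coordinates (adding a point to $A$ can kill $u$), so Janson-type inequalities are unavailable, and the dependency structure also involves the priorities, not just neighbourhood overlaps; controlling $\Pr[Z=0]$ here is genuinely harder than the triple-intersection bookkeeping you describe. Third, the mop-up step fails quantitatively: a set of $s^{4-\Omega(1)}$ undominated points can itself be a partial ovoid of size up to $st+1\approx s^{3}$, in which case any greedy completion on it adds far more than $s(\log s)^{\alpha}$ points. You need the undominated set to be empty a.a.s.\ (or of size $o(s)$), not merely polynomially smaller than $|\mathcal{P}|$.

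The paper sidesteps all three difficulties with a structural trick you did not use: it fixes a point $x$ and samples at most one point on each of the $t+1$ lines through $x$ (selecting each line independently with probability $ps=(s\log t-\alpha s\log\log s)/t$ and then a uniform point on it). Since a generalized quadrangle has no triangles, two points on distinct lines through $x$ are never collinear, so the sampled set $S$ is \emph{automatically} a partial ovoid --- no thinning, no priority rule, and the covering events become genuine products over independent line choices, which is what makes the moment and martingale computations (Propositions~\ref{moments} and~\ref{chalker}) go through. A second round then picks one uncovered point on each line through an uncovered point $x^*\in x^{\perp}$, which is again automatically independent and automatically non-adjacent to $S$; maximality is then forced by showing every line off $x$ has fewer than $\log s$ uncovered points while every non-collinear pair retains $\gtrsim(\log s)^{\alpha}$ common uncovered neighbours (this is where local sparseness and $t\ge s(\log s)^{2\alpha}$ enter, much as you anticipated). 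I would encourage you to look for a version of your first-round sampling that builds the independence into the sample space rather than enforcing it afterwards; that is the missing idea.
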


\medskip

Since $Q^-(5,q)$ is locally sparse, this theorem improves the upper bound of roughly $\frac{1}{2}q^2$ for $Q^-(5,q)$ to $q(\log q)^{\alpha}$ for large $q$, whilst the simple counting bound as well as the bounds in~\cite{EH} are linear in $q$. In fact, as we do not require the quadrangle to be classical, we also provide the first non-trivial upper bounds in other quadrangles of order $(s,s^{2})$, in particular for Kantor's quadrangles \cite{Kantor} and the $T_{3}(O)$ of Tits \cite{TitsSuzuki}.

Due to the {\em point-line duality}\index{generalized quadrangle!point-line duality} for quadrangles in which the words ``point'' and ``line'' are interchanged, Theorem \ref{main} shows that a quadrangle of order $(s,t)$, with the property that for any three lines, at most $t + 1$ lines intersect all three of those lines and $s \geq t(\log t)^{2\alpha}$, has a maximal partial spread of size at most $t(\log t)^{\alpha}$. For instance, the quadrangle $H(3,q^2)$ dual to $Q^-(5,q)$ has a maximal partial spread of size at most $q(\log q)^{\alpha}$.

\medskip

The proof of Theorem \ref{main} gives an efficient two-step randomized algorithm for finding a maximal partial ovoid of size at most $s(\log s)^{\alpha}$ asymptotically almost surely. Probabilistic methods have been used in projective geometry by G\'{a}cs and Sz\"{o}nyi~\cite{GS} for constructing small maximal partial spreads in projective spaces, and by Kim and Vu~\cite{KV} for constructing small complete arcs in projective planes by developing
concentration inequalities for certain non-Lipschitz functions. The probabilistic approach used here is slightly less technical than in~\cite{KV}, due to the careful application of a martingale inequality for non-Lipschitz functions of independent random variables.


This paper is organized as follows: in Section \ref{prelim} we state the geometric and probabilistic preliminaries required to prove
Theorem \ref{main}, essentially concentration of measure inequalities, and in Section \ref{proof} we prove Theorem \ref{main}.

\subsection{Notation}

Throughout the paper, $Q$ will denote a thick quadrangle of order $(s,t)$, namely with $s,t \geq 2$. The set of lines of $Q$ is denoted $\mL$ and the set of points is denoted $\mP$. For a set $R \subseteq \mP$, let $R^{\perp}$ denote the set of points collinear with all points in $R$, and let $R^{\perp}_{\circ} = R^{\perp} \backslash R$. Moreover, $R^{\bowtie}$ will denote the set of points collinear with at least one point in $R$ and let $R^{\bowtie}_{\circ} = R^{\bowtie} \backslash R$. If $R = \{u\}$, then we write $u^{\perp}$ instead of $\{u\}^{\perp}$ and $u^{\perp}_{\circ}$ instead of $\{u\}^{\perp}_{\circ}$, so that
\[ R^{\perp} = \bigcap_{x \in R} x^{\perp} \quad \mbox{ and } \quad R^{\bowtie} = \bigcup_{x \in R} x^{\perp}.\]

For the material to follow, if $x$ is a real number let $f_k(x) := x(x - 1)(x - 2) \dots (x - k + 1)$ denote the {\em $k$-th falling factorial} where $f_0(x) := 1$ for all $x$. Let $\mathbb N = \{0,1,2,\dots\}$.
For functions $f,g : \mathbb N \rightarrow \mathbb R$ we write $f \sim g$ if $\lim_{n \rightarrow \infty} f(n)/g(n) = 1$ and
$f \lesssim g$ if $\limsup_{n \rightarrow \infty} f(n)/g(n) \leq 1$.

\medskip

If $X : \Omega \rightarrow \mathbb R$ is a random variable and $D \subset \mathbb R$ then $[X \in D]$ denotes the event $\{\omega \in \Omega : X(\omega) \in D\}$. For an infinite sequence of events $(A_n)_{n \in I}$, we say that $A_n$ occurs {\em asymptotically almost surely (a.a.s.)} if $\mathbb P(A_n) \rightarrow 1$ as $n \rightarrow \infty$ where the limit is taken over $n \in I$.
When we use expressions such as $X_u \lesssim g(s)$ a.a.s  we mean that for all $\epsilon > 0, X_u \leq (1 + \epsilon)g(s)$ a.a.s.

\section{Preliminaries}\label{prelim}

\subsection{Generalized quadrangles.}

Let $s,t$ be positive integers. A {\em generalized quadrangle}\index{generalized quadrangle} of {\em order} $(s,t)$\index{generalized quadrangle!order} is an incidence structure $Q = (\mP,\mL,I)$ in which $\mP$ and $\mL$ are disjoint non-empty sets of objects called {\em points} and {\em lines} respectively, and for which $I$ is a symmetric point-line incidence relation satisfying the following axioms. First, each point is incident with $t+1$ lines and two distinct points are incident with at most one common line. Second, each line is incident with $s+1$ points  and two distinct lines are incident with at most one common point. Third, if
$(x,\ell) \in \mP \times \mL$ and $x \not \in \ell$, then there is a unique $(x',\ell') \in \mP \times \mL$ such that $x I \ell' I x' I \ell$. From the axioms,
a quadrangle of order $(s,t)$ contains $(s+1)(st+1)$ points and can only exist if $\sqrt{s} \leq t \leq s^2$ by a result of D. Higman \cite{Higman1,Higman2}, with a simple combinatorial proof given later by Cameron \cite{Cameron}. Recall that for a set $R$ of points, $R^{\perp}_{\circ}$ denotes the set of points that are not in $R$ and are collinear with all points in $R$. In the case of a thick and locally sparse quadrangle $Q$ of order $(s,t)$, the following statements can be derived from the axioms (see Payne and Thas~\cite{PT}):
if $R$ is a set of pairwise non-collinear points of $Q$, then
\begin{eqnarray}
\mbox{If }|R| = 1: &\mbox{ }& |R^{\perp}_{\circ}| = s(t + 1) \label{first}\\
\mbox{If }|R| = 2: &\mbox{ }& |R^{\perp}_{\circ}| = t + 1 \label{second} \\
\mbox{If }|R| = 3: &\mbox{ }& |R^{\perp}_{\circ}| \leq s + 1 \label{third}
\end{eqnarray}
The third statement is directly from the definition of a locally sparse quadrangle. These properties will be used extensively to prove Theorem \ref{main}.

\subsection{Elementary inequalities}

We collect here some elementary inequalities that will be used throughout the proof. From $-x - x^2 \leq \log (1 - x) \leq -x$ for $x \in [0,1/2]$ we obtain
for reals $x_i \in [0,1]$ summing to $N$ and with sum of squares equal to $M$:
\begin{equation}\label{ebound1}
e^{-N - M} \leq \prod_{i = 1}^n (1 - x_i) \leq e^{-N}.
\end{equation}
Furthermore in the special cases $x_1 = x_2 = \dots = x_n = x$ and $x_i = i/x$, we obtain the following pair of
asymptotic formulas:
\begin{eqnarray}
(1 - x)^n \sim e^{-nx} &\mbox{ }& \mbox{ if }nx^2 \rightarrow 0 \mbox{ as }n \rightarrow \infty
\label{ebound2} \\
f_n(x) \sim x^n &\mbox{ }& \mbox{ if }n^2/x \rightarrow 0 \mbox{ as }n \rightarrow \infty.
\label{fbound}
\end{eqnarray}

\subsection{Concentration inequalities.} We use three concentration inequalities in the proof of Theorem \ref{main}. The first is an easy consequence of Markov's Inequality for concentration
in the upper tail of $f_k(X)$ when $X$ is a non-negative integer-valued random variable.

\begin{proposition}\label{moments}
Let $k \in \mathbb N, \lambda \in \mathbb R_+$ with $\lambda \geq k$, and $X : \Omega \rightarrow \mathbb N$ be a random variable. Then
\[ \mathbb P(X \geq \lambda) \leq \frac{\mathbb E(f_k(X))}{f_k(\lambda)}.\]
\end{proposition}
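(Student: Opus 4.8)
The plan is to reduce the tail bound on $X$ to a tail bound on the integer-valued random variable $f_k(X)$, to which Markov's inequality applies directly. The key observation is that the falling factorial $f_k(x) = x(x-1)\cdots(x-k+1)$ is nondecreasing on the integers $x \geq k-1$: on this range every factor $x - j$ for $0 \leq j \leq k-1$ is nonnegative, and increasing $x$ by one only increases each factor, so $f_k$ is monotone nondecreasing there. Since $\lambda \geq k$, I would first argue that the event $[X \geq \lambda]$ is contained in the event $[f_k(X) \geq f_k(\lambda)]$. Indeed, if $X(\omega) \geq \lambda \geq k$, then $X(\omega) \geq k - 1$ as well, so by monotonicity $f_k(X(\omega)) \geq f_k(\lambda)$.

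The mild subtlety is that $\lambda$ is a real number rather than an integer, so one must be slightly careful comparing the integer value $X(\omega)$ against the real threshold $\lambda$. Because $X$ is integer-valued, the event $[X \geq \lambda]$ equals $[X \geq \lceil \lambda \rceil]$, and for any integer $m \geq \lceil \lambda \rceil \geq \lambda \geq k$ one has $f_k(m) \geq f_k(\lambda)$ again by monotonicity of $f_k$ on $[k-1,\infty)$ (here using that $f_k$ is nondecreasing as a function of a real variable on this range, which follows from the same factorwise argument together with the product rule for the derivative, or simply from the fact that each factor is positive and increasing). Hence $[X \geq \lambda] \subseteq [f_k(X) \geq f_k(\lambda)]$ as events.

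Next I would apply Markov's inequality to the nonnegative random variable $f_k(X)$. A point worth checking is that $f_k(X)$ is genuinely nonnegative as a random variable: when $X(\omega) \in \{0,1,\dots,k-1\}$ one of the factors $X(\omega) - j$ vanishes, so $f_k(X(\omega)) = 0$, and when $X(\omega) \geq k$ all factors are positive; in no case is $f_k(X(\omega))$ negative. Given this, Markov yields
\[
\mathbb P\bigl(f_k(X) \geq f_k(\lambda)\bigr) \leq \frac{\mathbb E(f_k(X))}{f_k(\lambda)},
\]
where $f_k(\lambda) > 0$ since $\lambda \geq k$ forces every factor $\lambda - j \geq \lambda - k + 1 \geq 1 > 0$. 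Combining the event inclusion with this bound gives
\[
\mathbb P(X \geq \lambda) \leq \mathbb P\bigl(f_k(X) \geq f_k(\lambda)\bigr) \leq \frac{\mathbb E(f_k(X))}{f_k(\lambda)},
\]
which is the claimed inequality.

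I do not expect any serious obstacle here, as the proposition is essentially a packaging of Markov's inequality; the only care needed is the two structural facts about falling factorials, namely nonnegativity of $f_k(X)$ and monotonicity of $f_k$ on $[k-1,\infty)$, together with the positivity of the denominator $f_k(\lambda)$. The real content is the choice of the test function $f_k$: working with falling factorials rather than ordinary powers is precisely what makes $\mathbb E(f_k(X))$ tractable later, since for sums of indicator variables the factorial moments expand into sums of probabilities of $k$-wise co-occurrences, which is the form in which the quadrangle incidence counts \eqref{first}--\eqref{third} will be supplied.
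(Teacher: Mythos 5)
Your argument is correct and is essentially the paper's own proof: both establish the event inclusion $[X \geq \lambda] \subseteq [f_k(X) \geq f_k(\lambda)]$ and then apply Markov's inequality to the nonnegative random variable $f_k(X)$. You simply spell out the supporting facts (monotonicity of $f_k$ on $[k-1,\infty)$, nonnegativity of $f_k(X)$, positivity of $f_k(\lambda)$) that the paper leaves implicit.
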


\begin{proof}
Since $X$ is non-negative integer valued, the event $X \geq \lambda$ is contained in the event $f_k(X) \geq f_k(\lambda)$. Therefore
\[ \mathbb P(X \geq \lambda) \leq \mathbb P(f_k(X) \geq f_k(\lambda)) \leq \frac{\mathbb E(f_k(X))}{f_k(\lambda)}\]
by Markov's Inequality.
\end{proof}

A sum of independent random variables is concentrated according to the so-called Chernoff Bound~\cite{C}. We shall use the
Chernoff Bound in the following form. We write $X \sim \mbox{Bin}(n,p)$ to denote a binomial random variable with probability $p$
over $n$ trials.

\begin{proposition}\label{chernoff}
Let $X \sim \mbox{Bin}(n,p)$. Then for $\delta \in [0,1]$,
\[ \mathbb P(|X - pn| \geq \delta pn) \leq 2e^{-\delta^2 pn/2}.\]
\end{proposition}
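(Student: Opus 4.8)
The plan is to use the exponential-moment (Chernoff) method, treating the two tails separately and combining them by a union bound to produce the factor $2$. Write $\mu = pn$ and $X = \sum_{i=1}^n Y_i$ with the $Y_i$ independent $\{0,1\}$-variables of mean $p$, so that the moment generating function factorises and can be relaxed to a Poisson form:
\[
\mathbb E(e^{\lambda X}) = \prod_{i=1}^n \mathbb E(e^{\lambda Y_i}) = \big(1 + p(e^{\lambda} - 1)\big)^n \le \exp\big(\mu(e^{\lambda} - 1)\big),
\]
using $1 + x \le e^x$. This clean exponential form is exactly what lets the optimisation over $\lambda$ be carried out in closed form and reduces everything to a scalar question independent of $n$ and $p$.

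For the upper tail I would apply Markov's inequality to $e^{\lambda X}$ with $\lambda > 0$, giving
\[
\mathbb P\big(X \ge (1+\delta)\mu\big) \le e^{-\lambda(1+\delta)\mu}\,\mathbb E(e^{\lambda X}) \le \exp\big(\mu(e^{\lambda} - 1 - \lambda(1+\delta))\big),
\]
and then set $\lambda = \log(1+\delta)$ to minimise the exponent, yielding $\mathbb P(X \ge (1+\delta)\mu) \le \exp(-\mu\,\psi_+(\delta))$ with $\psi_+(\delta) = (1+\delta)\log(1+\delta) - \delta$. For the lower tail I would run the same computation applied to $-X$ (equivalently with a negative exponent), minimising at $e^{-\theta} = 1-\delta$ to obtain $\mathbb P(X \le (1-\delta)\mu) \le \exp(-\mu\,\psi_-(\delta))$ with $\psi_-(\delta) = (1-\delta)\log(1-\delta) + \delta$. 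Since $\{|X - \mu| \ge \delta\mu\} = \{X \ge (1+\delta)\mu\} \cup \{X \le (1-\delta)\mu\}$, the union bound gives $\mathbb P(|X - \mu| \ge \delta\mu) \le \exp(-\mu\psi_+(\delta)) + \exp(-\mu\psi_-(\delta))$.

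It then remains to bound each rate function below by $\delta^2/2$ on $[0,1]$, after which every exponential is at most $e^{-\delta^2\mu/2}$ and their sum is at most $2e^{-\delta^2\mu/2}$, which is the claim. The lower-tail inequality $\psi_-(\delta) \ge \delta^2/2$ is the routine one: $\psi_-(0)=0$ and $\psi_-'(\delta) = -\log(1-\delta) \ge \delta$ (from $\log(1/(1-\delta)) \ge \delta$), so integrating from $0$ yields it immediately.

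The main obstacle is the upper tail, where the rate function $\psi_+$ is the genuinely delicate object. One has $\psi_+(0)=0$ and $\psi_+'(\delta) = \log(1+\delta)$, and the estimate $\psi_+(\delta) \ge \delta^2/2$ needed to match the stated constant must be extracted from a careful second-order analysis of $\psi_+$ across the whole interval rather than merely near the origin. It is near the endpoint $\delta = 1$ that this comparison is tightest and where all the work of landing on the factor $1/2$ (as opposed to a weaker constant) is concentrated; the binomial-to-Poisson relaxation of the moment generating function above is chosen precisely so that this crux collapses to a single clean inequality in the one variable $\delta$.
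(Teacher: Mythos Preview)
The paper does not actually prove this proposition; it is simply quoted as the standard Chernoff bound with a reference to~\cite{C}. So there is no proof in the paper to compare against, and the exponential-moment outline you give is the natural approach.

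That said, there is a genuine gap at precisely the point you flag as ``delicate.'' The inequality
\[
\psi_+(\delta) = (1+\delta)\log(1+\delta) - \delta \;\geq\; \frac{\delta^2}{2}
\]
that you say ``must be extracted from a careful second-order analysis'' is in fact \emph{false} on all of $(0,1]$. From the expansion $\psi_+(\delta) = \tfrac{\delta^2}{2} - \tfrac{\delta^3}{6} + O(\delta^4)$ one already sees $\psi_+(\delta) < \delta^2/2$ for small $\delta > 0$, and at the endpoint $\psi_+(1) = 2\log 2 - 1 \approx 0.386 < 0.5$. The sharpest estimate the Chernoff method with Poisson relaxation actually yields for the upper tail is $\psi_+(\delta) \geq \delta^2/(2+\delta)$, hence only
\[
\mathbb P\big(X \geq (1+\delta)\mu\big) \leq e^{-\delta^2\mu/3} \qquad (\delta \in [0,1]),
\]
and no amount of care in the optimisation over $\lambda$ will recover the constant $\tfrac12$ in the upper tail. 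Your lower-tail argument is fine; the obstruction is entirely in the upper tail, and it is not a matter of working harder near $\delta=1$ --- the inequality simply does not hold.

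For the purposes of the paper this is immaterial: the proposition is invoked exactly once (Section~3.3, to show $|S| \lesssim s\log t$), and any positive constant in the exponent suffices there. But as a proof of the statement exactly as written, your argument does not close.
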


The final concentration inequality is an extension of the Hoeffding-Azuma inequality~\cite{AlonSpencer} on martingales with
bounded differences:

\begin{proposition}\label{chalker}
Let $(Z_i)_{i = 0}^m$ be a martingale such that $\triangle_i := Z_{i + 1} - Z_i \geq -b$ and let $c = (c_1,c_2,\dots,c_m) \in \mathbb R_+^m$ and $\|c\|^2 = \sum_{i = 1}^m c_i^2$. Then for $\lambda \in \mathbb R_+$,
\[ \mathbb P(Z_m \leq Z_0 - \lambda) \leq e^{-\lambda^2/8\|c\|^2} + (1 + 2b) \sum_{i = 1}^{m-1} \mathbb P(\triangle_i < -c_i).\]
\end{proposition}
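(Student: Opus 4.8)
The plan is to deduce the bound from the classical Hoeffding--Azuma inequality applied to a downward-truncated version of $(Z_i)$, and to charge the cost of truncation to the additive error term. Write $(\mathcal{F}_i)$ for the underlying filtration and set $p_i := \mathbb{P}(\triangle_i < -c_i \mid \mathcal{F}_i)$. First I would truncate the downward increments: put $\triangle_i^{-} := \max(\triangle_i, -c_i)$ and $m_i := \mathbb{E}[\triangle_i^{-} \mid \mathcal{F}_i]$. Since $\mathbb{E}[\triangle_i \mid \mathcal{F}_i] = 0$ we have $m_i = \mathbb{E}[(-c_i - \triangle_i)^{+} \mid \mathcal{F}_i]$, and because $\triangle_i \geq -b$ forces $(-c_i - \triangle_i)^{+} \leq b$ on the event $\{\triangle_i < -c_i\}$ (using $c_i \geq 0$), this gives the key estimate $0 \leq m_i \leq b\,p_i$. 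The recentred increments $\triangle_i' := \triangle_i^{-} - m_i$ form a martingale difference sequence, so $Z_i' := Z_0 + \sum_{j<i}\triangle_j'$ is a martingale with $Z_0' = Z_0$ and $\triangle_i' \geq -(c_i + m_i)$.

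Next I would reduce the lower tail of $Z$ to that of $Z'$. On the good event $G := \bigcap_i \{\triangle_i \geq -c_i\}$ one has $\triangle_i^{-} = \triangle_i$ for every $i$, so $Z_m' - Z_0 = (Z_m - Z_0) - \sum_i m_i \leq Z_m - Z_0$, the recentring only lowering the walk. Hence $\{Z_m \leq Z_0 - \lambda\} \cap G \subseteq \{Z_m' \leq Z_0 - \lambda\}$, while off $G$ the union bound yields $\mathbb{P}(G^{c}) \leq \sum_i \mathbb{P}(\triangle_i < -c_i)$. This already produces one copy of $\sum_i \mathbb{P}(\triangle_i < -c_i)$ in the error; the drift $\sum_i m_i \leq b\sum_i p_i$, once passed from conditional to unconditional probabilities, will supply the remaining $2b\sum_i \mathbb{P}(\triangle_i < -c_i)$, and the two contributions assemble into the stated factor $(1 + 2b)$.

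It then remains to bound $\mathbb{P}(Z_m' \leq Z_0 - \lambda)$ by the Gaussian term. Here I would run the exponential-moment argument for $h > 0$, estimating $\mathbb{E}[e^{-h\triangle_i'} \mid \mathcal{F}_i]$ step by step. The increments $\triangle_i'$ are bounded below but \emph{not} above, so $-\triangle_i'$ is bounded above by $c_i + m_i$ and has mean zero: an upward increment only makes $e^{-h\triangle_i'}$ small, so the exponential moment is sub-Gaussian on the scale $2c_i$ apart from a correction coming from the event where the increment sits at its truncation level. Concretely one shows $\mathbb{E}[e^{-h\triangle_i'} \mid \mathcal{F}_i] \leq e^{2h^{2}c_i^{2}} + C(h)\,p_i$. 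Multiplying over $i$ via $\prod(1 + a_i) \leq e^{\sum a_i}$, combining with $e^{-h\lambda}$ through Markov's inequality, and optimising at $h = \lambda/(4\|c\|^2)$ (the effective window of width $2c_i$ is what replaces the usual constant $\tfrac12$ by $\tfrac18$) gives the term $e^{-\lambda^{2}/8\|c\|^{2}}$, the accumulated corrections feeding into the error already accounted for above.

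The main obstacle is this last step. Because the positive increments are unbounded, the two-sided Hoeffding lemma does not apply verbatim, and a one-sided exponential moment for a mean-zero, bounded-above random variable is \emph{not} sub-Gaussian in general: the bound degrades precisely when the increment takes its lower extreme value with non-negligible probability. The entire argument therefore hinges on proving the per-step estimate with a correction term that is genuinely \emph{linear} in $p_i = \mathbb{P}(\triangle_i < -c_i \mid \mathcal{F}_i)$, so that the failure mode of sub-Gaussianity is exactly the event being charged. The delicate point is then to ensure these per-step corrections aggregate additively, rather than compounding multiplicatively, so that the total error remains $O\!\left(\sum_i \mathbb{P}(\triangle_i < -c_i)\right)$ with the explicit constant $1 + 2b$.
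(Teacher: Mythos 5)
The paper itself does not prove this proposition; it is quoted from Lemma~1 of Chalker, Godbole, Hitczenko, Radcliff and Ruehr~\cite{Chalk}, so there is no internal proof to compare against and your argument must stand on its own. Its first half does stand: the truncation $\triangle_i^{-}=\max(\triangle_i,-c_i)$, the identity $m_i=\mathbb{E}[(-c_i-\triangle_i)^{+}\mid\mathcal{F}_i]$ with $0\le m_i\le b\,p_i$, the recentred martingale, and the reduction of $\{Z_m\le Z_0-\lambda\}$ to $\{Z_m'\le Z_0-\lambda\}$ at the cost of $\mathbb{P}(G^{c})\le\sum_i\mathbb{P}(\triangle_i<-c_i)$ are all correct and are the standard opening moves. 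The gap is exactly the step you flag as the ``main obstacle,'' and it is not a delicate point awaiting verification: the per-step estimate $\mathbb{E}[e^{-h\triangle_i'}\mid\mathcal{F}_i]\le e^{2h^{2}c_i^{2}}+C(h)\,p_i$ is false. Take $\triangle_i$ equal to $-c_i$ with probability $1-\delta$ and to $c_i(1-\delta)/\delta$ with probability $\delta$. This is a mean-zero increment with $\triangle_i\ge-c_i$, so $p_i=\mathbb{P}(\triangle_i<-c_i)=0$, the truncation and recentring do nothing ($\triangle_i'=\triangle_i$), and
\[ \mathbb{E}\bigl[e^{-h\triangle_i'}\bigr]=(1-\delta)e^{hc_i}+\delta e^{-hc_i(1-\delta)/\delta}\;\ge\;(1-\delta)e^{hc_i}, \]
which exceeds $e^{2h^{2}c_i^{2}}$ whenever $hc_i<1/2$ and $\delta$ is small enough, while the available correction term is zero; the optimisation $h=\lambda/(4\|c\|^{2})$ puts you squarely in the regime $hc_i\ll 1$. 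The underlying reason is that a mean-zero variable bounded only from below is not sub-Gaussian on the scale of that lower bound: to compensate a thin, far-out upper tail it may place almost all of its mass at $-c_i$ without ever crossing $-c_i$, so its conditional variance is not controlled by $c_i^{2}$ plus anything proportional to $p_i$. Your truncation removes the lower tail below $-c_i$, but $\triangle_i^{-}$ is still unbounded above, and that is what breaks the exponential-moment step.

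Moreover, the obstruction is not an artifact of your method. Running the same two-point increment independently for $m$ steps gives, with $c_i=b=1$ and $\lambda=m$, a left-hand side $(1-\delta)^{m}$ against a right-hand side $e^{-m/8}+0$, and $(1-\delta)^{m}>e^{-m/8}$ for every $\delta<1-e^{-1/8}$. So the proposition as printed --- with only a lower bound on the increments and only the lower-deviation probabilities $\mathbb{P}(\triangle_i<-c_i)$ in the error term --- is itself false, and no completion of your argument can succeed under these hypotheses alone. Any correct version must also control the upward fluctuations of the increments, for instance by charging $\mathbb{P}(|\triangle_i|>c_i)$ in the error term or by bounding conditional variances or the upper range of $\triangle_i$. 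You should work from the exact statement of Lemma~1 in \cite{Chalk} (or the variants in \cite{SS} and \cite{K}) rather than from the paraphrase given here, identify which additional hypothesis it carries, and check that the application in the proof of Property~III actually satisfies it.
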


A simpler but less general form of this inequality was given by Shamir and Spencer~\cite{SS} and an
essentially equivalent version was proved using stopping times by Kim~\cite{K}. A simple proof is given in Chalker, Godbole, Hitczenko, Radcliff and Ruehr (see Lemma 1,~\cite{Chalk}).

\section{Proof of Theorem \ref{main}}\label{proof}

\subsection{A randomized algorithm}

We start by describing the two-round randomized algorithm that will produce a maximal partial ovoid in a locally sparse quadrangle.
Let $Q$ be a locally sparse quadrangle of order $(s,t)$ where $t \geq s \geq 2$ and $t \geq s(\log s)^{2\alpha}$. Let $\mathcal{P}$ and $\mathcal{L}$ denote the point set and line set of $Q$.
All limits and asymptotic notation in what follows is taken with respect to $s \rightarrow \infty$.

\medskip

{\bf First Round.} Fix a point $x \in \mathcal{P}$, and for each line $\ell$ through $x$, independently flip a coin with heads probability
\begin{equation}\label{p}
ps = \frac{s\log t - \alpha s \log \log s}{t}
\end{equation}
where $\alpha > 4$. Note $ps \in [0,1]$ since $t \geq s(\log s)^{2\alpha} \geq s\log t$ if $s$ is large enough.
On each line $\ell$ where the coin turned up heads, select uniformly a point of $\ell \backslash \{x\}$.
Let $S$ be the set of selected points, and let
\[ U = \mathcal{P} \; \backslash \; (S \cup \{x\})^{\bowtie}.\]
the points in $S^{\bowtie}$ are called {\em covered by $S$}, and the points not in $S^{\bowtie}$ are {\em uncovered}. Then $U$ consists of the
uncovered points which are not collinear with $x$. Note also that for any point $y \in x^{\perp}_{\circ}$,
\[ \mathbb P(y \in S) = p.\]
The points not in $S^{\bowtie}$ will be covered by a set $T$ of points so that
$S \cup T$ is a maximal partial ovoid: the set $T$ is described as follows.

\medskip

{\bf Second Round.} Let $x^* \in x^{\perp} \backslash S^{\bowtie}$. On each line $\ell \in \mathcal{L}$ through $x^*$ with $\ell \cap U \neq \emptyset$,
uniformly and randomly select a point of $\ell \cap U$. These points together with a point on the line through $x^*$ and $x$
distinct from $x^*$ and $x$ form a partial ovoid $T$, and $S \cup T$ is also clearly a partial ovoid. For Theorem \ref{main},
we must show that if $s$ is large enough, then there exists a choice of $S$ and $T$ such that $S \cup T$ is a maximal partial ovoid, and also $|S \cup T| \leq s(\log s)^{\alpha}$.

\subsection{Random variables}

We first show that in selecting $S$, Properties I -- III described below occur simultaneously a.a.s. as $s \rightarrow \infty$:
{\em \begin{center}
\begin{tabular}{ll}
{\sc Property I}. & For all lines $\ell \in \mathcal{L}$ disjoint from $x$, $|\ell \cap U| < \lceil \log s \rceil$.\\
{\sc Property II}. & For all $u \in x^{\perp} \backslash S$,  $|u^{\perp} \cap U| \lesssim s(\log s)^{\alpha}$ \\
{\sc Property III}. &  For $v,w \not \in S \cup \{x\}$ with $v,w$ non-collinear, $|\{v,w\}^{\perp}_{\circ} \cap U| \gtrsim (\log s)^{\alpha}$.
\end{tabular}
\end{center}
}
For $u \in x^{\perp}_{\circ}$, let $U(u)$ denote the set of points in $\mathcal{P} \backslash x^{\perp}$ which are not
covered by $S \backslash \{u\}$, and define the random variable:
\[ X_u = |u^{\perp} \cap U(u)|.\]
In the case $u \in x^{\perp} \backslash S$, note that $U(u) = U$, so then $X_u = |u^{\perp} \cap U|$.
For $v,w \in \mathcal{P} \backslash \{x\}$ non-collinear, let
$U(v,w)$ denote the set of points in $\mathcal{P} \backslash x^{\perp}$ which are not
covered by $S \backslash \{v,w\}$, and define the random variable:
\[ X_{vw} = |\{v,w\}^{\perp}_{\circ} \cap U(v,w)|.\]
In the case $v,w \not \in S \cup \{x\}$, $U(v,w) = U$ and so $X_{vw} = |\{v,w\}^{\perp}_{\circ} \cap U|$.
Therefore, to prove Properties II and III, it is sufficient to show that a.a.s.,
$X_u \lesssim s(\log s)^{\alpha}$ for all $u \in x^{\perp}_{\circ}$ and $X_{vw} \gtrsim (\log s)^{\alpha}$ for all pairs of non-collinear points
$v,w \in \mathcal{P} \backslash \{x\}$. Assuming this is done, Theorem \ref{main} is derived as follows.

\subsection{Proof of Theorem \ref{main} from Properties I -- III}

First we show $|S| \lesssim s\log t$ using the Chernoff Bound, Proposition \ref{chernoff}.
There are $t + 1$ lines through $x$, and we independently selected each line with probability $ps$ and then one point on each selected line.
So $|S| \sim \mbox{Bin}(t + 1,ps)$ and $\mathbb E(|S|) = ps(t + 1) \sim s\log t$.
By Proposition \ref{chernoff}, for any $\delta > 0$,
\[ \mathbb P(|S| \geq (1 + \delta)s\log t) \leq 2\exp(-\tfrac{1}{2}\delta^2 s\log t) \rightarrow 0.\]
Therefore a.a.s. $|S| \lesssim s\log t$. Assuming that a.a.s., $S$ satisfies Properties I -- III, we fix an instance of such a partial ovoid $S$ with $|S| \lesssim s\log t$.

\medskip

We now randomly select an additional set $T$ of points as follows. Let $x^* \in x^{\perp} \backslash S^{\bowtie}$. For each line $\ell$ through $x^*$ such that $\ell \cap U \neq \emptyset$, randomly and uniformly choose a point of $\ell \cap U$.
Let $T$ be the set of those chosen points together with one point $x^{+}$ distinct from $x$ and $x^*$ on the line through $x$ and $x^*$. Note that the line through $x$ and $x^*$ has at least one other point, since $s \geq 2$. Then no point in $T$ is collinear with any point in $S$,
and no two points in $T$ are collinear -- in particular, $x^+$ is not collinear with any point on a line through $x^*$ since $Q$ is a quadrangle.
We estimate $|S \cup T|$ using Property II. By Property II, $|T| \leq X_{x^*} + 1 \lesssim s(\log s)^{\alpha}$. Therefore
\[ |S \cup T| \leq |S| + X_{x^*} + 1 \lesssim s\log t + s(\log s)^{\alpha} \lesssim s(\log s)^{\alpha}\]
as required for Theorem \ref{main}. Finally, we show that a.a.s., $S \cup T$ is a maximal partial ovoid, using Properties I and III.

\medskip

Clearly all points on the line through $x$ and $x^*$ are covered by $x^+$, and by construction of $T$ all other points collinear with $x^{*}$ are covered. For $v \in (x^{\perp} \backslash S^{\bowtie}) \cup U$ not collinear with $x^*$, a.a.s., $X_{vx^*} \geq \tfrac{1}{2}(\log s)^{\alpha}$ by Property III when $s$ is large enough (here $x^*$ plays for instance the role of $w$ in the statement of Property III).
By Property I, for large enough $s$, the probability that $v$ is not collinear with any point in $T$ is at most
\[ \Bigl(\frac{\log s - 1}{\log s}\Bigr)^{X_{vx^*}} \leq \Bigl(1 - \frac{1}{\log s}\Bigr)^{\tfrac{1}{2}(\log s)^{\alpha}} \leq e^{-\tfrac{1}{2}(\log s)^3} < \frac{1}{s^5}\]
since $\alpha > 4$. Since $|\mP| = (s + 1)(st + 1) \lesssim s^4$ since $t \leq s^2$, the expected number of points in $(x^{\perp} \backslash S^{\bowtie}) \cup U$ not collinear with any point in $T$ is at most
\[ s^{-5}|\mP| \lesssim \frac{1}{s}.\]
It follows that a.a.s.,
\[ (x^{\perp} \backslash S^{\bowtie}) \cup U \subset T^{\bowtie}\]
which means that $T$ covers all the points that were not covered by $S$.
This shows $S \cup T$ is a maximal partial ovoid, and proves Theorem \ref{main}. \qed

\subsection{Expected values}

The first step in proving Properties I -- III is to compute $\mathbb E(X_u)$ and $\mathbb E(X_{vw})$.
We show now that $\mathbb E(X_u) \sim s(\log s)^{\alpha}$ and $\mathbb E(X_{vw}) \sim (\log s)^{\alpha}$:

\begin{lemma}\label{expect}
Let $u \in x^{\perp}_{\circ}$, and let $v,w \in \mathcal{P} \backslash \{x\}$ be a pair of non-collinear points. Then
\[ \mathbb E(X_u) \sim s(\log s)^{\alpha} \quad \mbox{ and } \quad \mathbb E(X_{vw}) \sim (\log s)^{\alpha}.\]
In addition, if $j \in \mathbb N$ and $jtp^2 \rightarrow 0$ as $s \rightarrow \infty$, then $\mathbb E(X_u)^j \sim s^j (\log s)^{\alpha j}$.
\end{lemma}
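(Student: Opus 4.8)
The plan is to compute both expectations by linearity, reducing each to a single geometric count multiplied by a uniform ``survival'' probability, and then to invoke the elementary asymptotics in (\ref{ebound2}). The key geometric fact underlying everything is the following: for a point $z \notin x^{\perp}$, the points of $x^{\perp}_{\circ}$ that are collinear with $z$ are exactly $\{x,z\}^{\perp}_{\circ}$, which has size $t+1$ by (\ref{second}), and these $t+1$ points lie on $t+1$ \emph{distinct} lines through $x$ (two of them on a common line through $x$ would, together with $z \notin x^{\perp}$, create a triangle). Hence there is exactly one such point on each of the $t+1$ lines through $x$, and since each point on a given line enters $S$ with probability $p$ independently across lines, the events ``this point lies in $S$'' over these $t+1$ points are independent with probability $p$ each.

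For $\mathbb{E}(X_u)$ I would first count the candidate set $W_u = u^{\perp}_{\circ} \setminus x^{\perp}$. Because $u$ and $x$ are collinear, the no-triangle axiom forces every point collinear with both to lie on the line $ux$; deleting these $s$ points from the $|u^{\perp}_{\circ}| = s(t+1)$ points given by (\ref{first}) leaves $|W_u| = st$. For a fixed $z \in W_u$, one of the $t+1$ points of $\{x,z\}^{\perp}_{\circ}$ is $u$ itself (as $u$ is collinear with both $x$ and $z$), so excluding $u$ leaves $t$ independent points, and $\mathbb{P}(z \in U(u)) = (1-p)^t$. Linearity gives $\mathbb{E}(X_u) = st(1-p)^t$. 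Since $pt = \log t - \alpha\log\log s$ we have $e^{-pt} = (\log s)^{\alpha}/t$, and $t \geq s(\log s)^{2\alpha}$ forces $tp^2 \to 0$, so (\ref{ebound2}) yields $(1-p)^t \sim e^{-pt}$ and therefore $\mathbb{E}(X_u) \sim st\cdot(\log s)^{\alpha}/t = s(\log s)^{\alpha}$.

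The computation of $\mathbb{E}(X_{vw})$ follows the same scheme. The candidates are $\{v,w\}^{\perp}_{\circ} \setminus x^{\perp}$; by (\ref{second}) $|\{v,w\}^{\perp}_{\circ}| = t+1$, while the candidates also collinear with $x$ number at most $s+1$ (exactly (\ref{third}) when $v,w,x$ are pairwise non-collinear, and even fewer otherwise by the unique-collinearity axiom), so the candidate count is $t+1 - O(s) \sim t$ because $t \gg s$. For each candidate $z$, the set $\{x,z\}^{\perp}_{\circ}$ is again distributed one point per line through $x$, and excluding $v,w$ removes a number $\epsilon \in \{0,1,2\}$ of these points that is independent of $z$ (namely $\epsilon$ counts how many of $v,w$ are collinear with $x$, since both are automatically collinear with $z$). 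Thus $\mathbb{P}(z \in U(v,w)) = (1-p)^{t+1-\epsilon}$, and as $(1-p)^{1-\epsilon} \to 1$ we get $\mathbb{E}(X_{vw}) \sim t\cdot(1-p)^t \sim (\log s)^{\alpha}$.

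For the power $\mathbb{E}(X_u)^j$ I would, instead of the bare $\sim$-estimate, retain the second-order term: from $t\log(1-p) = -pt - \tfrac12 tp^2 + O(tp^3)$ and the exact identity $e^{-pt} = (\log s)^{\alpha}/t$ one gets $\mathbb{E}(X_u) = s(\log s)^{\alpha}\exp\bigl(-\tfrac12 tp^2 + O(tp^3)\bigr)$, so raising to the $j$-th power multiplies the error exponent by $j$, giving $\mathbb{E}(X_u)^j = s^j(\log s)^{\alpha j}\exp\bigl(-\tfrac12 jtp^2 + O(jtp^3)\bigr)$, which is $\sim s^j(\log s)^{\alpha j}$ precisely when $jtp^2 \to 0$ (note $jtp^3 = o(jtp^2)$ since $p \to 0$). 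I expect this last step to be the only genuinely delicate point: when $j = j(s)$ is allowed to grow, a crude $\sim$-estimate of $\mathbb{E}(X_u)$ is insufficient, and one must track the $\exp(-\tfrac12 tp^2)$ correction uniformly, which is exactly why the hypothesis is stated as $jtp^2 \to 0$ rather than merely $tp^2 \to 0$. The geometric counts themselves are routine once the ``one point per line through $x$'' structure and the resulting independence are in place.
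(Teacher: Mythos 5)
Your proposal is correct and follows essentially the same route as the paper: linearity of expectation over the candidate sets $u^{\perp}_{\circ}\setminus x^{\perp}$ (of size $st$ by (\ref{first})) and $\{v,w\}^{\perp}_{\circ}\setminus x^{\perp}$ (of size between $t-s$ and $t+1$ by (\ref{second}) and (\ref{third})), a survival probability $(1-p)^{t}$ or $(1-p)^{t+1-\epsilon}$ coming from the one-point-per-line-through-$x$ independence structure, and then (\ref{ebound2}) with the exact identity $e^{-pt}=(\log s)^{\alpha}/t$. Your explicit tracking of the $\exp(-\tfrac12 jtp^{2})$ correction for the $j$-th power is a slightly more careful rendering of the step the paper dispatches with ``by the same argument, since $jtp^{2}\to 0$,'' but it is the same argument.
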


\begin{proof}
Fix $u \in x^{\perp}_{\circ}$ and recall $U(u)$ is the set of points in $\mathcal{P} \backslash x^{\perp}$
which are not covered by $S \backslash \{u\}$. For $y \in u^{\perp}_{\circ} \backslash x^{\perp}$, we have $|\{y,x\}^{\perp}_{\circ} \backslash \{u\}| = t$ by (\ref{second}).
Therefore
\begin{equation}\label{survive}
\mathbb P(y \in U(u)) = (1 - p)^{t}.
\end{equation}
In addition, $s$ points other than $u$ are collinear with both $u$ and $x$, namely the points on the unique
line through $u$ and $x$. By (\ref{first}), $|u^{\perp}_{\circ} \backslash x^{\perp}| = s(t + 1) - s = st$. Therefore for $u \in x^{\perp}_{\circ}$, by (\ref{survive}),
\begin{eqnarray*}
\mathbb E(X_u) &=& \sum_{y \in u^{\perp}_{\circ} \backslash x^{\perp}} \mathbb P(y \in U(u)) = st \cdot (1 - p)^{t}.
\end{eqnarray*}
Since $sp \rightarrow 0$ and $tp^2  \rightarrow 0$ by definition of $p$, we have from (\ref{ebound2}) that
\[ \mathbb E(X_u) \sim st \cdot \exp(-pt) \sim s(\log s)^{\alpha}.\]
Furthermore, by the same argument, and since $jtp^2 \rightarrow 0$, we have from (\ref{ebound2}) that
\[ \mathbb E(X_u)^j \sim s^j (\log s)^{\alpha j}.\]
To estimate $\mathbb E(X_{vw})$, for $v,w \in \mathcal{P} \backslash \{x\}$ not collinear, let $Y = \{v,w\}^{\perp}_{\circ} \backslash x^{\perp}$. Recall $U(v,w)$ is the set of points in $\mathcal{P} \backslash x^{\perp}$
which are not covered by $S \backslash \{v,w\}$. By (\ref{third}), $t - s \leq |Y| \leq t + 1$.
Observe for $y \in Y$, we have $|\{y,x\}^{\perp}_{\circ} \backslash \{v,w\}| = t + 1 - |\{v,w\} \cap x^{\perp}|$. Therefore for $y \in Y$,
\begin{equation}\label{survive2}
\mathbb P(y \in U(v,w)) \sim (1 - p)^{t + 1 - |\{v,w\} \cap x^{\perp}|} \sim (1 - p)^{t + 1}.
\end{equation}
By (\ref{survive2}) and the definition of $X_{vw}$,
\begin{eqnarray*}
\mathbb E(X_{vw}) &=& \sum_{y \in Y} \mathbb P(y \in U(v,w)) \sim |Y| \cdot (1 - p)^t \sim t \cdot \Bigl(\frac{(\log s)^{\alpha}}{t}\Bigr) = (\log s)^{\alpha}.
\end{eqnarray*}
This completes the proof that $\mathbb E(X_{vw}) \sim (\log s)^{\alpha}$.
\end{proof}

\subsection{Proof of Property I}

Fix a line $\ell \in \mathcal{L}$ disjoint from $x$, and let $Y_{\ell}$ be the number of sequences of $a = \lceil \log s\rceil$ distinct points
in $U \cap (\ell \backslash x^{\perp})$. Then since $Q$ is a quadrangle, every point not on $\ell$ is collinear with at most one point on $\ell$.
Let $R \subset \ell \backslash x^{\perp}$ be a set of $a$ distinct points. By (\ref{second}), $|\{x,y\}^{\perp}_{\circ}| = t + 1$ for all $y \in \ell \backslash x^{\perp}$. Therefore
\begin{equation}\label{manycollinear}
 \Bigl|\bigcup_{y \in R} \{x,y\}^{\perp}_{\circ}\Bigr| = at + 1
 \end{equation}
since there is a unique point in $\ell \cap x^{\perp}_{\circ}$ collinear with all points in $R$.
By (\ref{manycollinear}),
\[ \mathbb E(Y_{\ell}) = s(s - 1)(s - 2) \dots (s - a + 1) \cdot (1 - p)^{at + 1}.\]
Since $atp^2 \rightarrow 0$ and $a^2/s \rightarrow 0$, we may apply (\ref{ebound2}) and (\ref{fbound}) to obtain
\[ \mathbb E(Y_{\ell}) \sim \frac{s^a(\log s)^{a\alpha}}{t^a}.\]
Let $A_{s} = \displaystyle{\bigcup_{{\ell \in \mathcal{L}}\atop{x \not \in \ell}} [Y_{\ell} \geq 1]}$.
Since $|\mL| = (t + 1)(st + 1) \sim s t^2$ is the total number of lines,
\[ \mathbb P(A_{s}) \leq \sum_{{\ell \in \mathcal{L}}\atop{x \not \in \ell}} \mathbb P(Y_{\ell} \geq 1) \lesssim s t^2 \cdot \mathbb E(Y_{\ell}) \sim \frac{s^{a + 1}(\log s)^{a\alpha}}{t^{a - 2}}.\]
Since $t \geq s(\log s)^{2\alpha}$ and $a = \lceil \log s\rceil$, $\mathbb P(A_{s}) \rightarrow 0$ as $s \rightarrow \infty$, as required for Property I. \qed

\subsection{Proof of Property II}

By Lemma \ref{expect}, $\mu = \mathbb E(X_u) \sim s(\log s)^{\alpha}$ for all $u \in x^{\perp}_{\circ}$.
Let $j = \lceil (\log s)^2 \rceil$. To show $X_u \lesssim \mu$ a.a.s.,
we will apply Proposition \ref{moments} together with the following claim:

\begin{center}
\parbox{5.5in}{{\bf Claim.} {\it For any $u \in x^{\perp}_{\circ}$, $\mathbb E(f_{j}(X_u)) \lesssim \mu^{j}$.}}
\end{center}

{\it Proof of claim.} Fix a sequence $R$ of $j$ distinct points in $u^{\perp} \backslash x^{\perp}$, that is, collinear with $u$
but not on the line through $u$ and $x$, and let $R(u) = R^{\bowtie} \backslash \{u\}$. Let
\[ d = \sum_{\ell \ni x : u \not \in \ell} |R(u) \cap \ell|.\]
Then for a given line $\ell$ through $x$ but not through $u$,
\[ \mathbb P(S \cap R(u) \cap \ell = \emptyset) = 1 - p \cdot |R(u) \cap \ell|.\]
Since lines through $x$ are selected independently,
\[ \mathbb P(R \subset U(u)) = \prod_{\ell \ni x : u \not \in \ell} (1 - p|R(u) \cap \ell|).\]
Now we apply inequality (\ref{ebound1}) to obtain:
\[ \mathbb P(R \subset U(u)) \leq e^{-pd}.\]
By (\ref{second}), $|\{x,y\}^{\perp} \backslash \{u\}| = t$ for $y \in R$, and by (\ref{third}), $|\{x,y,z\}^{\perp}_{\circ}| \leq s + 1$ for $y,z \in R$. It follows by inclusion-exclusion that
\[ d \geq jt - {j \choose 2}(s + 1).\]
This shows
\[
\mathbb P(R \subset U) \leq e^{-pd} \leq e^{-pjt + pj^2(s + 1)}.
\]
Since $|u^{\perp}_{\circ} \backslash x^{\perp}| = st$ by (\ref{first}), there are at most $f_{j}(st)$ choices for the sequence $R$.
By definition of $X_u$,
\begin{eqnarray*}
\mathbb E(f_{j}(X_u)) \leq \sum_{{R \subset u^{\perp} \backslash x^{\perp}}\atop{|R| = j}} \mathbb P(R \subset U) \leq f_j(st) \cdot e^{-pjt + pj^2(s + 1)}.
\end{eqnarray*}
By definition of $p$ and since $t \geq s(\log s)^{\alpha}$, $pj^2(s + 1) \rightarrow 0$. Since $pjt = j\log t - \alpha j \log \log s$,
\[ \mathbb E(f_{j}(X_u)) \lesssim s^j (\log s)^{\alpha j}.\]
By Lemma \ref{expect}, since $jtp^2 \rightarrow 0$, we have $\mu^j \sim s^j(\log s)^{\alpha j}$, as required for the claim. $\diamond$

\medskip

To prove Property II, we combine the claim with Proposition \ref{moments}, where $\lambda = \mu/(1 - \delta)$ and $\delta \in (0,1)$ is a fixed
positive constant independent of $s$. By Proposition \ref{moments},
\[ \mathbb P(X_u \geq \lambda) \leq \frac{\mathbb E(f_j(X_u))}{f_j(\lambda)} \lesssim (1 - \delta)^{j} \leq e^{-\delta j}\]
by (\ref{ebound1}). Since $j = \lceil (\log s)^2 \rceil$, the above quantity is at most $s^{-\delta \log s}$ provided $s$ is
large enough. So the expected number of $u$ such that the event $[X_u \geq \lambda]$ occurs is at most asymptotic to
\[ s^{-\delta \log s} \cdot |u^{\perp}_{\circ}| \sim s^{-\delta \log s} \cdot st \rightarrow 0\]
since $t \leq s^2$. By Markov's Inequality,
\[ \mathbb P\Bigl(\bigcup_{u \in x_{\circ}^{\perp}} [X_u \geq \tfrac{\mu}{1 - \delta}]\Bigr) \rightarrow 0\]
as $s \rightarrow \infty$. Since $\delta$ was arbitrary,  
we find that a.a.s, $X_u \lesssim s(\log s)^{\alpha}$ for all $u \in x_{0}^{\perp}$. \qed

\subsection{Proof of Property III}

By Lemma \ref{expect}, $\nu = \mathbb E(X_{vw}) \sim (\log s)^{\alpha}$ for all pairs of non-collinear points $v, w \in \mathcal{P} \backslash \{x\}$. We show that a.a.s., $X_{vw} \gtrsim \nu$, using Proposition \ref{chalker} on a carefully chosen martingale. Fix a pair of non-collinear points $v, w \in \mathcal{P} \backslash \{x\}$, and recall
$U(v,w)$ is the set of points in $\mathcal{P} \backslash x^{\perp}$ which are not covered by $S \backslash \{v,w\}$. Let $\mathfrak{L}$
be the set of lines through $x$ which contain neither $v$ nor $w$, so $t - 1 \leq |\mathfrak{L}| \leq t + 1$ by (\ref{second}).

\medskip

{\bf Notation.} Let $\beta$ satisfy $\alpha - 1 > \beta > 3$, $r = |\mathfrak{L}|/(\log s)^{\beta}$, and let $m = (\log s)^{\beta}$ -- we assume $(\log s)^{\beta}$ is an integer -- this does not affect the asymptotic computations below.
Let $\mathfrak{L}_1,\mathfrak{L}_2,\dots,\mathfrak{L}_{m}$ be a partition of $\mathfrak{L}$ into sets of $r$ lines, and $\mathfrak{M}_i := \mathfrak{L}_1 \cup \mathfrak{L}_2 \cup  \dots \cup \mathfrak{L}_i$. Let $L_i = \bigcup_{\ell \in \mathfrak{L}_i} \ell$
and $M_i = \bigcup_{\ell \in \mathfrak{M}_i} \ell$ for $i \leq m$. For $z \in x^{\perp}_{\circ}$, let $\chi_z = 1$ if $z \in S$ and $\chi_z = 0$ otherwise.

\medskip

{\bf Definition of a martingale.} Let $Z = X_{vw}$ and let $Z_i = \mathbb E(Z|\mathcal{F}_i)$ where for $0 \leq i \leq m$, $\mathcal{F}_i$ is the $\sigma$-field generated by the random variables
$\{\chi_z : z \in M_i\}$ with $M_0 = \emptyset$. Then $(Z_i)_{i = 0}^{m}$ is a Doob martingale with difference sequence $\triangle_i = Z_{i + 1} - Z_i$ for $0 \leq i < m$
and $Z_0 = \mathbb E(Z) = \nu$. Let $I = \{v,w\}^{\perp}_{\circ} \backslash x^{\perp}$. For $y \in I$, let $\phi_y = 1$ if $y \in U(v,w)$ and $\phi_y = 0$ otherwise, and define
\[ \triangle_i(y) = \mathbb{E}(\phi_y|\mathcal{F}_{i+1}) - \mathbb{E}(\phi_y|\mathcal{F}_i).\]
Then by definition of $Z$,
\[ \triangle_i = \sum_{y \in I} \triangle_i(y).\]

{\bf Distribution of differences.}
For $y \in I$, we note $|\{x,y\}^{\perp}_{\circ} \cap L_i| = |\mathfrak{L}_i| = r$ for all $i \leq m$. Therefore for $y \in I$,
\begin{eqnarray*}
\triangle_i(y) &=& \prod_{z \in y^{\perp} \cap M_{i + 1}} (1 - \chi_z) (1 - p)^{|\mathfrak{L}| - |\mathfrak{M}_{i + 1}|} - \prod_{z \in y^{\perp} \cap M_i} (1 - \chi_z) (1 - p)^{|\mathfrak{L}| - |\mathfrak{M}_i|} \\
&=& \prod_{z \in y^{\perp} \cap M_i} (1 - \chi_z)(1 - p)^{|\mathfrak{L}| - |\mathfrak{M}_i|} \Bigl(\frac{\prod_{z \in y^{\perp} \cap L_{i + 1}} (1 - \chi_z)}{(1 - p)^r} - 1\Bigr).
\end{eqnarray*}
Since $|\mathfrak{M}_i| = ir$, we may use the above explicit formula for $\triangle_i(y)$ to determine the distribution of $\triangle_i(y)$, according to the following table:
the first column is the value of $\triangle_i(y)$ and the second column is the probability that $\triangle_i(y)$ equals the value in the first column for $y \in I$:
\[ \triangle_i(y) = \left\{\begin{array}{ll}
-(1 - p)^{|\mathfrak{L}| - ir} & (1 - p)^{ir} - (1 - p)^{(i + 1)r} \\
((1 - p)^{-r} - 1) (1 - p)^{|\mathfrak{L}| - ir} \mbox{\hspace{0.5in}}  & (1 - p)^{(i + 1)r} \\
0 & 1 - (1 - p)^{ir}
\end{array}\right.\]
We note that $|\mathfrak{L}| \in \{t-1,t,t+1\}$.
The random variable $\triangle_i(y)$ is not as easy to deal with as $\tilde{\triangle}_i(y) = \min\{\triangle_i(y),0\}$ for $y \in I$; the distribution of the latter is
\[ \tilde{\triangle}_i(y) = \left\{\begin{array}{ll}
-(1 - p)^{|\mathfrak{L}| - ir} \mbox{\hspace{0.5in}} & (1 - p)^{ir} - (1 - p)^{(i + 1)r} \\
0   & 1 - (1 - p)^{ir} + (1 - p)^{(i + 1)r} \\
\end{array}\right.\]
Let $\tilde{\triangle}_i = \sum_{y \in I}  \tilde{\triangle}_i(y)$. Note that $\mathbb E(\tilde{\triangle}_i(y)) < 0$.

\medskip

{\bf Expected values.} By definition of $\tilde{\triangle}_i(y)$ and the choice of $r$,
\begin{eqnarray*}
\mathbb E(\tilde{\triangle}_i(y)) &=& -(1 - p)^{|\mathfrak{L}| - ir} (1 - p)^{ir} (1 - (1 - p)^r) \\
&=& -(1 - p)^{|\mathfrak{L}|} (1 - (1 - p)^r) \\
&\sim& -(1 - p)^{|\mathfrak{L}|} pr.
\end{eqnarray*}
By (\ref{ebound2}) and $|\mathfrak{L}| \in \{t-1,t,t+1\}$, $(1 - p)^{|\mathfrak{L}|} \sim (\log s)^{\alpha}/t$, and so
\begin{eqnarray*}
-(1 - p)^{|\mathfrak{L}|} pr &\sim& -\frac{(\log s)^{\alpha}}{t} \cdot \frac{\log t}{(\log s)^{\beta}} \\
 &=& -\frac{(\log s)^{\alpha - \beta}(\log t)}{t}.
\end{eqnarray*}
Then since $|I| = |\{v,w\}^{\perp}_{\circ} \backslash x^{\perp}| \sim t$ by (\ref{second}) and (\ref{third}),
\begin{eqnarray*}
\mu_i := \mathbb E(\tilde{\triangle}_i) = \sum_{y \in I} \mathbb E(\tilde{\triangle}_i(y)) \sim -(\log s)^{\alpha - \beta}(\log t).
\end{eqnarray*}

\medskip

{\bf Concentration.} This is the main part of the proof of Property III. We use Proposition \ref{moments} to show that $\tilde{\triangle}_i$ is highly unlikely to drop substantially below its expectation $\mu_i$.
Note throughout that $\mu_i < 0$. Let $k = \lceil(\log s)^{\alpha - \beta}(\log t)\rceil$ and $\varepsilon = (\log s)^{\beta - \alpha}(\log t)$.
Note $\varepsilon \rightarrow 0$ as $s \rightarrow \infty$, since $\alpha > \beta + 1$ and $t \leq s^2$. We prove the following claim:

\begin{center}
\parbox{5.5in}{{\bf Claim.} {\it For all $i < m$, $\mathbb P(\tilde{\triangle}_i < (1 + \varepsilon)\mu_i) \lesssim  t^{-(\log t)/2}$.}}
\end{center}

{\it Proof of claim.} Let $Y$ be the number of $y \in I$ such that $\chi_z = 1$ for some $z \in y^{\perp} \cap L_{i + 1}$ and
$\chi_z = 0$ for all $z \in y^{\perp} \cap M_i$. Note that since $|I| \sim t$ by (\ref{second}) and (\ref{third}),
\begin{eqnarray}
\mathbb E(Y) &=& \sum_{y \in I} (1 - p)^{ir} (1 - (1 - p)^r) \nonumber \\
&\sim& t(1 - p)^{ir}(1 - (1 - p)^r) \; \; \sim \; \; t(1 - p)^{ir} \cdot pr. \label{eofy}
\end{eqnarray}
Also note that $\tilde{\triangle}_i = -Y(1 - p)^{|\mathfrak{L}| - ir}$ by definition of $\tilde{\triangle}_i(y)$ for $y \in I$.
It follows that
\[ [\tilde{\triangle}_i \leq (1 + \varepsilon)\mu_i] = [Y \geq (1 + \varepsilon)\mathbb E(Y)].\]
Let $\lambda = (1 + \varepsilon) \mathbb E(Y)$.
Let $S_{i + 1}$ be the set of selected points in $L_{i + 1}$ and $W = |S_{i + 1}|$.
Let $\sigma(R) = 1$ if $R \subseteq S_{i + 1}^{\bowtie} \cap I$ and
$\sigma(R) = 0$ otherwise, and let $\tau(R) = 1$ if for every $y \in R$, $y^{\perp} \cap M_i \cap S = \emptyset$,
and $\tau(R) = 0$ otherwise.  We apply Proposition \ref{moments}:
\[ \mathbb P(Y \geq \lambda) \leq \frac{\mathbb E(f_k(Y))}{f_k(\lambda)}.\]
By definition of $f_k(Y)$,
\[ f_k(Y) = \sum_{R \subset I} \sigma(R) \tau(R)\]
where the sum is over all sequences $R$ of $k$ points in $I$. Note that the contribution to the sum
of sequences $R$ which are not subsets of $S_{i + 1}^{\bowtie} \cap I$ is zero, by definition of $\sigma(R)$ and $\tau(R)$.
For any sequence $R$ of $k$ points in $I$, each $y \in R$ is collinear with exactly one point on each line in $\mathfrak{M}_{i}$, so in total
$y$ is collinear with $ir$ points in $M_i$, and $r$ points in $L_{i + 1}$. By inclusion-exclusion,
\[ \Bigl|\bigcup_{y \in R} y^{\perp} \cap M_i\Bigr| \geq k(ir) - {k \choose 2}(s + 1)\]
since any two points in $I$ are collinear with at most $s + 1$ points in $M_i$, by (\ref{third}).
Therefore
\[ \mathbb P(\tau(R) = 1) \leq (1 - p)^{k(ir) - {k \choose 2}(s + 1)} \leq (1 - p)^{kir}e^{pk^2(s + 1)}. \]
Since $pk^2(s + 1) \leq (\log t)k^2(s + 1)/t$ and $t \geq s(\log s)^{2\alpha}$, we find
\[ \mathbb P(\tau(R) = 1) \lesssim (1 - p)^{kir}.\]
Now since $\sigma(R)$ and $\tau(R)$ are independent random variables,
\[ \mathbb E(f_k(Y)) = \sum_{R \subset I} \mathbb P(\sigma(R) = 1) \mathbb P(\tau(R) = 1) \lesssim  (1 - p)^{kir} \cdot \sum_{R \subset I} \mathbb P(\sigma(R) = 1).\]
Now $\sum_{R \subset I} \sigma(R)$ counts the number of sequences of $k$ distinct points in $I$ each collinear with at least one point in $S_{i + 1}$. If $R$ comprises points $y_1,y_2,\dots,y_k \in I$, then there exists a smallest positive integer $\kappa \leq k$ such that $S_{i + 1}$
contains $\kappa$ distinct points $z_1,z_2,\dots,z_{\kappa}$ with $z_i \in y_i^{\perp}$ for all $i \leq \kappa$ and $y_{\kappa + 1},y_{\kappa + 2},\dots,y_k$ are each collinear with at least one of $z_1,z_2,\dots,z_{\kappa}$. There are at most $(t + 1)^{\kappa}$ choices for $y_1,y_2,\dots,y_{\kappa} \in I$. Since $|y_i^{\perp} \cap x^{\perp} \cap L_{i+1}| = r$ for all $i$, there are at most $r$ choices for $z_i$
for $1 \leq i \leq \kappa$. Finally, since each $y_i : i > \kappa$ is collinear with at least one of $z_1,z_2,\dots,z_{\kappa}$,
there are at most
\[ \kappa^{k - \kappa} |\{v,w,z_i\}^{\perp}|^{k - \kappa} \leq \kappa^{k - \kappa} (s + 1)^{k - \kappa}\]
choices for $y_{\kappa + 1},y_{\kappa + 2},\dots,y_k$ by (\ref{third}). Each such configuration
has probability at most $p^{\kappa}$. Therefore
\begin{eqnarray*}
\sum_{R \subset I} \mathbb P(\sigma(R) = 1) &\leq& \sum_{\kappa = 1}^{k} (pr)^{\kappa}(t + 1)^{\kappa} \kappa^{k - \kappa} (s + 1)^{k - \kappa} \\
&\leq& (pr)^k(t + 1)^k \sum_{\kappa = 0}^k \theta^{\kappa} \\
&=& (pr)^k(t + 1)^k \cdot \frac{1 - \theta^{k + 1}}{1 - \theta}
\end{eqnarray*}
where $\theta = k(s + 1)/pr(t + 1)$. Since $t \geq s(\log s)^{2\alpha}$ and $r = |\mathfrak{L}|/(\log s)^{\beta}$ and $k = \lceil (\log s)^{\alpha - \beta}\log t \rceil$, we deduce
\[ \theta \lesssim \frac{ks}{r\log t} \lesssim \frac{(\log s)^{\alpha}s}{t} \rightarrow 0\]
as $s \rightarrow \infty$ and therefore
\[ \sum_{R \subset I} \mathbb P(\sigma(R) = 1) \lesssim (prt)^k.\]
As $(t + 1)^k \sim t^k$ by the choice of $k$, it follows from (\ref{eofy}) that
\begin{eqnarray*}
\mathbb E(f_k(Y)) &\lesssim& (1 - p)^{kir} \cdot (prt)^k \; \; \lesssim \; \; \mathbb E(Y)^k.
\end{eqnarray*}
Since $f_{k}(\lambda) \sim (1 + \varepsilon)^k \mathbb E(Y)^{k}$ by (\ref{fbound}), Proposition \ref{moments} gives
\[ \mathbb P(Y \geq \lambda) \lesssim (1 + \varepsilon)^{-k}.\]
For large enough $s$, $\varepsilon \leq 1$, and so $\log(1 + \epsilon) \geq \varepsilon - \varepsilon^2/2$. 
Therefore for large enough $s$, 
\[ \log (1 + \varepsilon)^{-k} \leq -\varepsilon k + \frac{1}{2}\varepsilon^2 k \leq -\frac{1}{2}(\log t)^2.\]
It follows that
\[ \mathbb P(\tilde{\triangle}_i < (1 + \varepsilon)\mu_i) \leq \mathbb P(Y \geq \lambda) \lesssim (1 + \varepsilon)^{-k} \leq t^{-(\log t)/2}. \]
This proves the claim. $\diamond$

\bigskip

{\bf Proof of Property III.} Since $\triangle_i$ majorizes $\tilde{\triangle_i}$ and $\mu_i < 0$, we conclude from the claim that
\[ \mathbb P(\triangle_i < (1 + \varepsilon)\mu_i) \leq \mathbb P(\tilde{\triangle}_i < (1 + \varepsilon)\mu_i) \lesssim t^{-(\log t)/2}.\]
Now we apply Proposition \ref{chalker}. Let
\[ c_i = |(1 + \varepsilon) \mu_i| \sim (1 + \varepsilon)(\log s)^{\alpha - \beta}(\log t)\]
for $0 \leq i < m$. Then
\[ \sum_{i = 0}^{m - 1} c_i^2 \sim (\log s)^{\beta} (1 + \varepsilon)^2 (\log s)^{2\alpha - 2\beta}(\log t)^2 \sim (\log s)^{2\alpha - \beta}(\log t)^2.\]
Let $b = t+1$. Then $\triangle_i \geq -b$ for $0 \leq i < m$. So for any $\zeta \in (0,1)$ independent of $s$,
\begin{eqnarray*}
\mathbb P(X_{vw} < (1 - \zeta)\nu) &\leq& \exp\Bigl(-\frac{\zeta^2 \nu^2}{8\|c\|^2}\Bigr) + (1 + 2b) \sum_{i = 0}^{m - 1} \mathbb P(\triangle_i < -c_i).
\end{eqnarray*}
Since $\nu \sim (\log s)^{\alpha}$, the exponent in the first term is asymptotic to
\[ -\frac{\zeta^2}{8}(\log s)^{2\alpha}(\log s)^{-2\alpha + \beta}(\log t)^{-2} \lesssim -10\log s\]
since $\beta > 3$ and $t \leq s^2$. Since $m = (\log s)^{\beta}$, the second term is
\[ (1 + 2b) \sum_{i = 0}^{m-1} \mathbb P(\triangle_i < -c_i) \lesssim (1 + 2b) \cdot mt^{-(\log t)/2} \lesssim s^{-10}\]
using the claim.  So for any $\zeta \in (0,1)$, and a fixed pair $v,w \in \mathcal{P} \backslash \{x\}$ of non-collinear points,
\[ \mathbb P(X_{vw} < (1 - \zeta)\nu) \lesssim 2s^{-10}.\]
Therefore the expected number of pairs $v,w \in \mathcal{P} \backslash \{x\}$ of non-collinear points
such that $X_{vw} < (1 - \zeta)\nu$ is at most $|\mP|^2 s^{-10} \lesssim s^4 t^2 s^{-10} \leq s^{-2}$, since $t \leq s^2$. By Markov's Inequality,
a.a.s., every pair $v,w \in \mathcal{P} \backslash \{x\}$ of non-collinear points has
$X_{vw} \geq (1 - \zeta)\nu$. Since this is valid for arbitrary $\zeta > 0$ and $\nu \sim (\log s)^{\alpha}$,
Property III is proved. \qed

\section{Concluding remarks}

$\bullet$ In this paper we showed that any locally sparse quadrangle of order $(s,t)$ with $t \geq s(\log s)^{2\alpha}$ and $s$ large enough has a maximal partial ovoid of size at most $s(\log s)^{\alpha}$ when $\alpha > 4$. The main obstruction to carrying out the same proof as Theorem \ref{main} for general quadrangles with $t$ large enough relative to $s$ is the locally sparse condition -- in particular in a quadrangle of order $(s,t)$ even with $t$ large relative to $s$, there may be sets of three points all collinear with up to $t + 1$ other points. For instance, in $H(4,q^2)$, which is a quadrangle of order $(s,t) = (q^2,q^3)$, this situation arises~\cite{PT}. Nevertheless we
pose the following problem for all quadrangles:

\medskip

{\bf Problem 1.} {\em Does every generalized quadrangle of order $(s,t)$ have a maximal partial ovoid of size at most $s \cdot \mbox{polylog}(s)$ as $s \rightarrow \infty$?}

\medskip

This should be compared with the easy linear lower bound $(s + 1)(st + 1)/s(t + 1) \sim s$, which has been slightly improved to a larger constant times $s$
in a number of special cases (see De Beule, Klein, Metsch and Storme~\cite{deBKMS} and the references therein).

$\bullet$ In some cases, superlinear lower bounds on the size of a maximal partial ovoid in a quadrangle of order $(s,t)$ may be provable
using explicit algebraic representations over finite fields. This typically involves using known character sum inequalities, for instance as in G\'acs and Sz\"{o}nyi~\cite{GS}. We leave the open problem of determining whether a superlinear lower bound can be achieved in $Q^-(5,q)$:

\medskip

{\bf Problem 2.} {\em Is there a function $\omega : \mathbb N \rightarrow \mathbb R_+$ such that $\omega(q) \rightarrow \infty$ as $q \rightarrow \infty$ and every maximal partial ovoid in $Q^-(5,q)$ has size at least $q\omega(q)$?}

\bigskip

$\bullet$ The randomized algorithm in this paper is very simple to implement, and we believe it is effective in finding maximal partial ovoids even in $(s,t)$-quadrangles where $s$ is not too large. In addition, it can be deduced from the proof that the probability that the algorithm does not return a maximal partial ovoid of size at most $s(\log s)^\alpha, \alpha>4$, is at most $s^{-\log s}$ if $s$ is large enough.

\section{Acknowledgement}

We would like to thank J.A. Thas for pointing out several useful facts on generalized quadrangles.

\end{document}